\newtheorem{thm}{Theorem}[section]
\newtheorem{cor}[thm]{Corollary}
\newtheorem{lem}[thm]{Lemma}
\theoremstyle{definition}
\newtheorem{con}[thm]{Conjecture}
\numberwithin{equation}{section}
\newcommand{\pr}[1]{\left(#1\right)}
\begin{document}

\title[Compact groups with many elements of bounded order]{Compact groups with many elements of bounded order}%
\author[M. Soleimani Malekan]{Meisam Soleimani Malekan}%
 \address{Department of Mathematics, University of Isfahan, Isfahan, Iran} 
\email{m.soleimani@sci.ui.ac.ir}
\author[A. Abdollahi]{Alireza Abdollahi}%
\address{Department of Mathematics, University of Isfahan, Isfahan 81746-73441, Iran; Institute for Research in Fundamental Sciences, School of Mathematics, Tehran, Iran.} 
\email{a.abdollahi@math.ui.ac.ir}%
\author[M. Ebrahimi]{Mahdi Ebrahimi}
\address{Institute for Research in Fundamental Sciences, School of Mathematics, Tehran, Iran.} 
\email{m.ebrahimi.math@gmail.com}
\subjclass[2010]{20E18; 20P05}%
\keywords{Compact groups; Profinite groups; large subsets}%

\begin{abstract}
L\'evai  and  Pyber proposed the following as a conjecture:\\  
 Let $G$ be a profinite group such that the set of solutions of the equation $x^n=1$
has positive Haar measure. Then $G$ has an open subgroup $H$ and an element $t$ such that all
elements of the coset $tH$ have order dividing $n$ (see Problem 14.53 of [The Kourovka Notebook, No. 19, 2019]). \\
The validity of the conjecture has been proved in [Arch. Math. (Basel) 75 (2000) 1-7] for $n=2$. Here we study the conjecture for compact groups $G$ which are not necessarily profinite and $n=3$; we show that in the latter case the group $G$ contains an open normal $2$-Engel subgroup. 
\end{abstract}
\maketitle
\section{Introduction and Results}

 Every compact Hausdorff topological group $G$ admits a unique normalized Haar measure  $\mathfrak{m}_G$.  If $X$ is a measurable subset of $G$ such that $\mathfrak{m}_G(X)>0$, then it is not true in general that $X$  contains a non-empty open subset; the latter is not even true for profinite groups i.e.,  compact totally disconnected Hausdorff topological groups, see e.g. \cite{LP}. However, as far as we know, there is no counterexample for the latter question for certain subsets of positive Haar measure defined by words in profinite groups. 
In this direction, the following conjecture has been proposed by L\'evai and Pyber in \cite{LP}.

\begin{con}[Conjecture 3 of \cite{LP}, Problem 14.53 of \cite{Ko}] \label{con} Let $G$ be a profinite group such that the set of solutions of the equation $x^n=1$
has positive Haar measure. Then $G$ has an open subgroup $H$ and an element $t$ such that all
elements of the coset $tH$ have order dividing $n$.
\end{con}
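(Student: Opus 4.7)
The plan is to combine a Steinhaus-type argument for the Haar measure with structural results on groups of bounded exponent. Write $T_n = \{x \in G : x^n = 1\}$ and assume $\mathfrak{m}_G(T_n) = \alpha > 0$. The first step is to extract algebraic constraints by Fubini: for a positive-measure set of pairs $(x,y) \in T_n \times T_n$, the product $xy$ again lies in $T_n$, so the relations $x^n = y^n = (xy)^n = 1$ hold simultaneously on many pairs. For $n=3$ this meshes with the Levi--van der Waerden theorem that groups of exponent $3$ are $2$-Engel and nilpotent of class at most $3$, which is presumably the route the present paper takes to its open normal $2$-Engel subgroup.

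Second, I would reduce to a finite problem via the profinite structure. Since $G = \varprojlim G/N$ over open normal subgroups $N$, the image of $T_n$ in some $G/N$ has density at least $\alpha$. A quantitative finite statement --- that any finite group with $|T_n|/|G| \geq \alpha$ contains a coset $tH$ on which $x^n = 1$ identically, with $[G:H]$ bounded in terms of $\alpha$ alone --- would pass through the inverse limit to yield the desired open subgroup.

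Third, I would attack this finite quantitative version by induction on $|G|$ combined with analysis of minimal normal subgroups. For a minimal normal $N \trianglelefteq G$, either some coset of $N$ carries a large fraction of $T_n$ (triggering induction on $|N|$) or $T_n$ spreads evenly enough that the image in $G/N$ still has density at least $\alpha$ (triggering induction on $[G:N]$). The base cases reduce to almost-simple groups, where CFSG-type input can control which finite simple groups admit large sets of $n$-th roots of unity.

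The main obstacle is bridging measure-theoretic density and pointwise exponent: positive density of $T_n$ says only that $x^n = 1$ holds \emph{generically}, while the conclusion demands an open set where it holds \emph{everywhere}. Even for $n=3$, the intermediate result of this paper yields only a $2$-Engel open subgroup, and converting ``Engel plus a generic exponent-$3$ identity'' into a genuine bounded-exponent coset appears to require delicate control over the exponent in locally nilpotent groups of class $3$. For general $n$ one would additionally need effective forms of Zelmanov's positive solution to the restricted Burnside problem to keep the size of the coset under control as $|G|$ grows, and this effectivity is precisely what seems to be missing from the current toolkit.
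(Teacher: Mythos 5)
There is a fundamental mismatch here: the statement you are proving is Conjecture~\ref{con}, which is an \emph{open problem}. The paper offers no proof of it; the authors state explicitly that they ``were not able to settle Conjecture~\ref{con} for $n=3$'' and prove only the strictly weaker Theorem~\ref{spilitEngel} and Corollary~\ref{Engel} (an open normal $2$-Engel subgroup, not a coset of exponent dividing $n$). Your text is likewise not a proof but a research program, and you concede as much in your final paragraph, where you correctly identify the unresolved obstacle of upgrading a generic identity $x^n=1$ to one holding on an entire coset. So the honest verdict is that there is a genuine gap --- indeed the gap is the conjecture itself.

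Beyond that global point, two of your intermediate steps fail as stated. First, the Fubini claim that a positive-measure set of pairs $(x,y)\in T_n\times T_n$ satisfies $xy\in T_n$ is unjustified: Theorem~\ref{key} gives positivity of $\mathfrak{m}(T_n\cap x^{-1}T_n)$ only for $x$ in a neighbourhood of $1$, and that neighbourhood may meet $T_n$ in a null set (in the conjectured extremal configuration $T_n\supseteq tH$ with $t\notin H$, products of two elements of $T_n$ land in $t^2H$, which need not meet $T_n$ at all). Second, the induction scheme in your third step breaks in the quotient branch: the image of $T_n$ in $G/N$ does have density at least $\alpha$, and induction would produce a coset $\bar t\bar H$ of exponent dividing $n$ in $G/N$; but pulling back yields only a coset of $G$ whose elements have order dividing $n$ \emph{modulo} $N$, which is not the conclusion you need, so the inductive hypothesis does not transfer. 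Finally, your ``quantitative finite statement'' in the second step --- a coset of index bounded in terms of $\alpha$ alone, uniformly over all finite groups --- is essentially equivalent to the full conjecture (the inverse-limit gluing is the easy part), so invoking it is circular. If you want to engage with what the paper actually does, the relevant mechanism is Theorem~\ref{key} combined with Lemma~\ref{2-Engel}: one intersects translates of $X$ to force the hypotheses of a word-identity lemma on a neighbourhood of $1$, which is how the authors reach the $2$-Engel conclusion without ever producing the coset the conjecture demands.
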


The validity of Conjecture \ref{con} was confirmed for $n=2$ in \cite{LP}. We generalize the latter for compact (Hausdorff) groups for the set of elements inverting by an automorphism (not necessarily continuous) of a compact group, see Theorem \ref{invthm} below. Here we also study Conjecture \ref{con} for $n=3$. However we were not able to settle Conjecture \ref{con} for $n=3$,  we prove that  compact groups (not necessarily profinite) in which the set of elements of order dividing $3$ is of positive Haar measure, contain open normal $2$-Engel subgroups, see Corollary \ref{Engel} below; actually we prove a more general result that the latter is valid for compact groups in which the set of elements that an automorphism of order $3$ acts on as an splitting automorphism, has positive Haar measure, see   Theorem \ref{spilitEngel} below. 

 Our main tool (Theorem \ref{key} below) to deal with compact groups is proved in Section \ref{sec1}. Theorem \ref{key} is a general result of independent interest on measurable subsets of compact groups with positive Haar measure. Theorem \ref{key} in particular shows that   the latter subsets are ``relatively  $k$-large sets" in compact groups (see \cite{JW, JW2}; for definition of ``$k$-large sets" and for some results on them see \cite{MK}).   

\section{Subsets of compact groups with positive Haar measure are Large}\label{sec1}

Throughout all topological groups are Hausdorff. 
Let $G$ be a compact group with the unique normalized Haar measure ${\mathfrak m}_G$; usually the index $G$ is dropped if the group is known from the context. 
It follows from Corollary 20.17 of \cite{hr} that for measurable subsets $A$ and $B$ of $G$ with ${\mathfrak m}(A)>0$ and ${\mathfrak m}(B)>0$, the map from $G$ to ${\mathbb R}^{\geq 0}$ defined by $x\mapsto {\mathfrak m}(A \cap xB)$ is non-zero and continuous. Following a question proposed by A. Abdollahi in \cite{MOF}, we learned that the following generalization of the latter result holds.  
\begin{thm}\label{key}
Let $G$ be a compact group. Suppose that $A_1, \dots, A_n$ are measurable subsets of $G$ with positive Haar measure. Then the map 
\[ \Lambda : \underset{n{\rm -times}}{G\times \cdots\times G}\rightarrow\mathbb R^{\geq0},\quad (x_1, \dotsc, x_k)\mapsto {\mathfrak m}\big(x_1A_1 \cap \cdots \cap x_nA_n\big)\] 
is non-zero and continuous. In particular, if $A$ is a measurable subset with positive Haar measure, then for any positive integer $k$ there exists an open subset $U$ of $G$ containing $1$ such that $\mathfrak{m}_G(A \cap u_1 A \cap \cdots \cap u_k A)>0$ for all $u_1,\dots,u_k \in U$.    
\end{thm}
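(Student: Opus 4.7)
The plan is to rewrite $\Lambda$ as an integral of a product of translated indicator functions,
\[
\Lambda(x_1,\dots,x_n) = \int_G \prod_{i=1}^n \chi_{A_i}(x_i^{-1}g)\,d\mathfrak{m}(g),
\]
and then treat non-vanishing and continuity as two essentially independent questions about this integral. The ``in particular'' clause will follow immediately, by specialising to $A_1=\cdots=A_{k+1}=A$ and using continuity at the tuple $(1,1,\dots,1)$, where $\Lambda$ evaluates to $\mathfrak{m}(A)>0$.

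For non-vanishing, I would integrate $\Lambda$ over $G^n$ and apply Fubini--Tonelli to pull the innermost $g$-integration to the outside. The remaining multi-integral then factors over $i$, and for each fixed $g$ a left-invariance substitution gives $\int_G \chi_{A_i}(x_i^{-1}g)\,d\mathfrak{m}(x_i) = \mathfrak{m}(A_i)$. Consequently
\[
\int_{G^n}\Lambda\,d(\mathfrak{m}\times\cdots\times\mathfrak{m}) \;=\; \prod_{i=1}^n \mathfrak{m}(A_i) \;>\;0,
\]
so $\Lambda$ is strictly positive on a set of positive measure in $G^n$, and in particular not identically zero.

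For continuity, the main input is the classical fact that for any $f\in L^1(G)$ the orbit map $x\mapsto L_x f$, where $L_x f(g):=f(x^{-1}g)$, is continuous from $G$ into $L^1(G)$. Given a point $(x_1,\dots,x_n)$ and nearby $(y_1,\dots,y_n)$, I would bound $\abs{\Lambda(y)-\Lambda(x)}$ by a telescoping sum in which the tuples agree in the first $j-1$ coordinates, differ in the $j$-th, and still carry $y_i$ in coordinates $i>j$. Because each $\chi_{A_i}$ is pointwise bounded by $1$, all the ``untouched'' factors in a telescope term trivially integrate to at most the $L^1$-norm of the single changed factor, so the $j$-th piece is controlled by $\norm{L_{x_j}\chi_{A_j}-L_{y_j}\chi_{A_j}}_1$; this tends to $0$ as $y_j\to x_j$, and summing over $j$ yields joint continuity.

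I do not anticipate a genuine obstacle here: the whole argument is a routine combination of Fubini and strong continuity of translation on $L^1$. The only mild point of care is organising the telescope so that the untouched factors in each term are bounded pointwise by $1$ while the single changed factor is estimated in the $L^1$-norm; this is what makes the $n$-variable continuity reduce to one-variable continuity of $x\mapsto L_x\chi_{A_j}$.
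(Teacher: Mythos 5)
Your proposal is correct and follows essentially the same route as the paper: the paper proves continuity via the identical telescoping decomposition (its Theorem on the map $\Psi$), bounding each term by $\norm{L_{x_s}\chi_{A_s}-L_{y_s}\chi_{A_s}}_2$ via Cauchy--Schwarz rather than your pointwise-bound-plus-$L^1$ estimate, and establishes non-vanishing by the same Fubini computation giving $\prod_i \mathfrak{m}(A_i)>0$. The choice of $L^1$ versus $L^2$ continuity of translation is an immaterial variation.
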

Theorem \ref{key} can be obtained from a more general result as follows; this result, in a sense, is a generalization of the convolution product: \\
\begin{thm}\label{MSM}
Let $G$ be a compact group and let $\xi_1\dots,\xi_n$ be elements in the unit ball of $L^\infty(G)$. Then the map 
\[ \Psi:\underset{n{\rm -times}}{G\times \cdots\times G} \rightarrow\mathbb{C}, \quad (x_1,\dotsc,x_n)\mapsto\int_G\pr{\prod_{k=1}^nL_{x_k}\xi_k(g)}\,\text d{\mathfrak m}(g)\] 
is continuous.
\end{thm}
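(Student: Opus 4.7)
The plan is to reduce the assertion to the case in which each $\xi_k$ is continuous, and then to exploit the compactness of $G$ together with a standard density argument in $L^1(G)$. The reduction rests on the telescoping identity
\[
\prod_{k=1}^n a_k-\prod_{k=1}^n b_k=\sum_{j=1}^n\Bigl(\prod_{k<j}a_k\Bigr)(a_j-b_j)\Bigl(\prod_{k>j}b_k\Bigr),
\]
which, combined with $\|fg\|_1\le\|f\|_\infty\|g\|_1$ and the left-invariance of $\mathfrak m$, turns $L^1$-approximations of the $\xi_k$ into uniform-in-$x$ approximations of the products.

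First I would treat the case $\xi_k\in C(G)$ for every $k$. Since $G$ is compact, the function $(x,g)\mapsto\xi_k(x^{-1}g)$ is uniformly continuous, so $x\mapsto L_x\xi_k$ is continuous from $G$ into $C(G)$ endowed with the supremum norm. Because pointwise multiplication is continuous on bounded subsets of $C(G)$ in the sup norm (another instance of the telescoping identity above), the map
\[
(x_1,\dotsc,x_n)\longmapsto \prod_{k=1}^n L_{x_k}\xi_k\in C(G)\subset L^1(G)
\]
is continuous, and composing with the continuous functional $\int_G\,\cdot\,d\mathfrak m$ on $L^1(G)$ yields continuity of $\Psi$ in this special case.

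For the general case, fix $\epsilon>0$ and, for each $k$, pick $\eta_k\in C(G)$ with $\|\eta_k\|_\infty\le 1$ and $\|\xi_k-\eta_k\|_1<\epsilon/(3n)$; the sup-norm bound can be secured by post-composing a given continuous $L^1$-approximation with the radial projection of $\mathbb C$ onto the closed unit disc, which cannot increase the $L^1$-distance to $\xi_k$. Applying the telescoping identity with $a_k=L_{x_k}\xi_k$ and $b_k=L_{x_k}\eta_k$, together with $\|L_{x_k}\xi_k\|_\infty\le 1$, $\|L_{x_k}\eta_k\|_\infty\le 1$, and the left-invariance of Haar measure, one obtains
\[
\int_G\Bigl|\prod_k L_{x_k}\xi_k-\prod_k L_{x_k}\eta_k\Bigr|\,d\mathfrak m\le \sum_{k=1}^n\|\xi_k-\eta_k\|_1<\tfrac{\epsilon}{3},
\]
uniformly in $x=(x_1,\dotsc,x_n)\in G^n$. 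Denoting by $\Psi_\eta$ the analogue of $\Psi$ with $\xi_k$ replaced by $\eta_k$, the first step gives continuity of $\Psi_\eta$, hence a neighborhood $U$ of any prescribed point $x^0\in G^n$ on which $|\Psi_\eta(x)-\Psi_\eta(x^0)|<\epsilon/3$; the triangle inequality then gives $|\Psi(x)-\Psi(x^0)|<\epsilon$ on $U$.

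The main obstacle is precisely the uniformity in $x$ of the $L^1$-approximation of $\prod_k L_{x_k}\xi_k$ by $\prod_k L_{x_k}\eta_k$: the passage from $L^1$-closeness of the factors to $L^1$-closeness of the products is exactly what demands that the approximants $\eta_k$ stay bounded by $1$ in $L^\infty$, and the invariance of $\mathfrak m$ under left translations is what kills the $x$-dependence of the resulting bound. Once this uniform estimate is secured, the reduction to the continuous case is routine.
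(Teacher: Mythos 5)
Your proposal is correct, and it reaches the conclusion by a genuinely different (though morally parallel) route. The paper telescopes the difference $\Psi(x_1,\dotsc,x_n)-\Psi(y_1,\dotsc,y_n)$ over the translation variables: writing $F_s$ for the product of the remaining factors (with $y_k$ for $k<s$ and $x_k$ for $k>s$), it bounds each term by the Cauchy--Schwarz inequality in $L^2(G)$, using $\|F_s\|_2\le 1$ (this is where the unit-ball hypothesis enters), to get the uniform estimate $|\Psi(x_1,\dotsc,x_n)-\Psi(y_1,\dotsc,y_n)|\le\sum_{s=1}^n\|L_{x_s}\xi_s-L_{y_s}\xi_s\|_2$, and then simply quotes the continuity of $x\mapsto L_x\xi$ from $G$ into $L^2(G)$ (Folland, Proposition (2.41)). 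You instead telescope over the replacement of $\xi_k$ by continuous approximants $\eta_k$, using the $L^1$--$L^\infty$ H\"older bound and left-invariance of $\mathfrak m$ to make the approximation of the products uniform in $x$, and you dispose of the continuous case by uniform continuity on the compact group; the observation that the radial retraction of $\mathbb C$ onto the closed unit disc is nonexpansive, so that the approximants can be kept in the unit ball without losing $L^1$-closeness, is exactly the point needed to make the telescoping estimate work. The trade-off is clear: the paper outsources the analytic content to the standard translation-continuity lemma and gets a three-line proof with an explicit modulus of continuity, whereas your argument is self-contained --- indeed it essentially reproves that lemma, since the cited result is itself established by the same density-plus-uniform-continuity argument --- at the cost of a longer three-epsilon reduction. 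Both proofs rest on the same two pillars: a telescoping identity for differences of products, and the $L^\infty$ bound on the complementary factors.
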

{\noindent\bf Proof of Theorem \ref{MSM}.}
The Cauchy-Schwarz inequality in the Hilbert space $L^2(G)$ can be stated in the following integral form: 
\begin{align}\label{C-S ineq}
\left|\int_G \xi(g)\overline{\eta(g)}\,\text d{\mathfrak m}(g)\right|\leq\|\xi\|_2\|\eta\|_2\quad(\xi,\eta\in L^2(G)).
\end{align} 
The left translate of a function $f$ on $G$ by an $x\in G$, $L_xf$, is defined by 
\[ L_xf(g)=f(x^{-1}g)\quad(g\in G).\]
The following is a special case of \cite[(2.41) Proposition.]{folland}:
\begin{lem}\label{uniformcontiniuty} 
Let $\xi$ be in $L^2(G)$. Then the map 
\[ G\rightarrow L^2(G), \quad g\mapsto L_x\xi\] 
is continuous.
\end{lem}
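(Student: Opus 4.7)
The plan is the standard density argument: verify continuity first for $\eta\in C(G)$, then extend to all $\xi\in L^2(G)$ by exploiting that each $L_g$ is an $L^2$-isometry.

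Two general facts drive the argument. First, left-invariance of ${\mathfrak m}$ implies that $L_g$ is a linear isometry of $L^2(G)$ for every $g\in G$. Second, since $G$ is compact, every $\eta\in C(G)$ is left-uniformly continuous: for every $\delta>0$ there is an open neighborhood $U$ of $1$ such that $\abs{\eta(uy)-\eta(y)}<\delta$ for all $y\in G$ and $u\in U$.

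To handle the continuous case, I would fix $\eta\in C(G)$ and $g_0\in G$. The substitution $y=g_0^{-1}x$, valid because ${\mathfrak m}$ is left-invariant, gives
\[ \norm{L_g\eta-L_{g_0}\eta}_\infty = \sup_{y\in G}\abs{\eta(g^{-1}g_0\,y)-\eta(y)}.\]
By left-uniform continuity, this supremum is smaller than any prescribed $\delta>0$ provided $g^{-1}g_0\in U$ for a suitable neighborhood $U$ of $1$; and since ${\mathfrak m}(G)=1$ we have $\norm{\cdot}_2\leq\norm{\cdot}_\infty$, so $\norm{L_g\eta-L_{g_0}\eta}_2<\delta$ on the open set $g_0U^{-1}$.

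To pass to general $\xi\in L^2(G)$, given $\eps>0$ I would use density of $C(G)$ in $L^2(G)$ (a consequence of regularity of Haar measure) to choose $\eta\in C(G)$ with $\norm{\xi-\eta}_2<\eps/3$. The isometry property gives $\norm{L_g(\xi-\eta)}_2=\norm{\xi-\eta}_2<\eps/3$ uniformly in $g$, and the triangle inequality
\[ \norm{L_g\xi-L_{g_0}\xi}_2 \leq \norm{L_g(\xi-\eta)}_2 + \norm{L_g\eta-L_{g_0}\eta}_2 + \norm{L_{g_0}(\eta-\xi)}_2\]
becomes $<\eps$ as soon as $g^{-1}g_0$ lies in the neighborhood from the previous step, which is precisely continuity at $g_0$. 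The only genuine obstacle is the left-uniform continuity of $\eta\in C(G)$, which is not entirely formal; on a compact group, however, it follows from a standard tube-lemma argument applied to the continuous function $(u,y)\mapsto\eta(uy)-\eta(y)$ on a neighborhood of the compact set $\{1\}\times G$.
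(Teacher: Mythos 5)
Your proof is correct. The paper offers no argument of its own here --- it simply cites Proposition (2.41) of Folland's book --- and your density argument (uniform continuity of continuous functions on a compact group, $\|\cdot\|_2\leq\|\cdot\|_\infty$, the isometry property of $L_g$, and an $\eps/3$ approximation by $C(G)$) is precisely the standard proof of that cited proposition, so there is nothing to fault.
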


{\bf Proof of Theorem \ref{MSM}.}
For $x_k,y_k$, $k=1,\dots,n$, in $G$, and $1\leq s\leq n$, put 
\[F_s(g):=\pr{\prod_{k<s}L_{y_k}\xi_k(g)}\pr{\prod_{k>s}L_{x_k}\xi_k(g)}\quad(g\in G).\]
If the index of the latter product is empty, we put 1 instead of the product. By the latter convention,  
\[\Psi(x_1,\dots, x_n)-\Psi(y_1,\dots,y_n)=\sum_{s=1}^n\int_G\pr{L_{x_s}\xi_s(g)-L_{y_s}\xi_s(g)}F_s(g)\,\text d{\mathfrak m}(g).\]
It follows from the Cauchy-Schwarz inequality, (\ref{C-S ineq}), and the fact that $\|F_s\|_2\leq1$,  
\[ |\Psi(x_1,\dotsc,x_n)-\Psi(y_1,\dotsc,y_n)|\leq\sum_{s=1}^n\left\|L_{x_s}\xi_s-L_{y_s}\xi_s\right\|_2\] 
whence by Lemma \ref{uniformcontiniuty}  the continuity of $\Psi$ follows. $\hfill \Box$ \\

{\bf Proof of Theorem \ref{key}.}
Apply Theorem \ref{MSM} for $\xi_k:=\chi_{A_k}$, $k=1,\dotsc, n$, and note that by Fubini's theorem, we have 
\[\int_{\prod_{i=1}^nG}{\mathfrak m}\big(  x_1A_1 \cap \cdots \cap x_nA_n  \big)\text d{\mathfrak m}_{\prod_{i=1}^nG}(x_1,\dotsc,x_n)={\mathfrak m}(A_1)\cdots {\mathfrak m}(A_n)>0.  \]

Since $\epsilon:=\mathfrak{m}_G(A)>0$, it follows from the continuity  of the map $\Lambda$ corresponding to $A_i:=A$ ($i=1,\dots, k+1$)  there exists an open subset $\mathfrak{U}$ of $\underset{(k+1){\rm -times}}{G\times \cdots\times G}$ containing $(1,1,\dots,1)$ such that 
$\mathfrak{m}_G(A \cap u_1A \cap \cdots \cap u_k A)\geq \epsilon>0$ for all $(1,u_1,\dots,u_k)\in \mathfrak{U}$. Now let $U:=U_0 \cap U_1 \cap \cdots \cap U_k$, where $U_0\times U_1 \times \cdots \times U_k \subseteq \mathfrak{U}$ and $U_i$ ($i=0,1,\dots, k$) are open subsets of $G$ containing $1$, has the required property.  $\hfill \Box$ \\

Following \cite{JW} a subset $X$ of a group $G$ is called large if $\bigcap_{a\in F} a X$ is not empty for any finite non-empty subset $F\subseteq G$. We say that a subset $X$ of a group $G$ is called relatively $k$-large with respect to a subset $M$ of $G$ for some $k\in\mathbb{N}$ if  $\bigcap_{a\in F} a X$ is not empty for any subset $F\subseteq M$ with $|F|=k$. Thus, by Theorem \ref{key}, for every $k\in\mathbb{N}$, each measurable subset of a compact group with positive Haar measure is relatively $k$-large with respect to an open subset $U_k$ containing $1$. 

\section{Compact groups with an automorphism inverting many elements}

In this section we generalize \cite[Corollary 6]{LP}.

\begin{thm} \label{invthm}
Let $G$ be a compact group having an automorphism (not necessarily continuous) $\alpha$ such that the set $X=\{x\in G \;|\; x^\alpha=x^{-1}\}$ is measurable and of positive Haar measure. Then $G$ contains an open normal abelian subgroup. In particular, if $G$ is profinite, there exists an open abelian subgroup $A$ of $G$ such that $tA\subseteq X$ for some $t\in X$.   
\end{thm}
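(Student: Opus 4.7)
The plan is to use Theorem \ref{key} to manufacture an open set of pairwise commuting elements, which will force an open normal abelian subgroup, and in the profinite case to extract a coset $tA\subseteq X$ by a Steinhaus-type argument inside a single coset of that subgroup. The algebraic key is the identity $u^\alpha=a^{-1}u^{-1}a$, which holds whenever $a\in X$ and $u^{-1}a\in X$: indeed, combining $(u^{-1}a)^\alpha=a^{-1}u$ (from $u^{-1}a\in X$) with $a^\alpha=a^{-1}$ (from $a\in X$) gives $(u^\alpha)^{-1}a^{-1}=a^{-1}u$. Consequently, if a single $a$ lies in $X\cap u_1X\cap u_2X\cap (u_1u_2)X$, then applying this identity to $u_1$, $u_2$ and $u_1u_2$ yields
\[ a^{-1}u_1^{-1}u_2^{-1}a=u_1^\alpha u_2^\alpha=(u_1u_2)^\alpha=a^{-1}u_2^{-1}u_1^{-1}a, \]
so $u_1u_2=u_2u_1$.

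Next, I would apply Theorem \ref{key} with $k=3$ to obtain an open neighborhood $U$ of $1$ with $\mathfrak{m}(X\cap u_1X\cap u_2X\cap u_3X)>0$ for all $u_i\in U$, and choose a symmetric open neighborhood $V$ of $1$ with $VV\subseteq U$. For any $u_1,u_2\in V$ the triple $(u_1,u_2,u_1u_2)$ lies in $U^3$, so the four-fold intersection is non-empty and $u_1,u_2$ commute by the previous step. Therefore $H:=\langle V\rangle$ is an open abelian subgroup of $G$, and its normal core $N=\bigcap_{g\in G}gHg^{-1}$ is an intersection of the finitely many conjugates of $H$ (finite because $[G:H]<\infty$ by compactness), hence an open normal abelian subgroup of $G$.

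For the profinite refinement, $X$ is a finite disjoint union of sets of the form $X\cap t_0N$, so some such set has positive measure; being non-empty, it contains an element that I take as $t$, and I set $Y:=\{n\in N:tn\in X\}=t^{-1}X\cap N$. For $n\in Y$, expanding $(tn)^\alpha=(tn)^{-1}$ with $t^\alpha=t^{-1}$ gives $n^\alpha=tn^{-1}t^{-1}$, and using the abelianness of $N$ one verifies that $Y$ is closed under products and inverses, hence a subgroup. Since $Y$ is a measurable subgroup of positive Haar measure in the compact group $N$, the identity $YY^{-1}=Y$ combined with the standard Steinhaus-type fact shows $Y$ contains an open neighborhood of $1$ and is therefore open; taking $A=Y$ gives $tA\subseteq X$ with $t\in X$. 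The main obstacle throughout is that $\alpha$ carries no topological data, so all topology must enter through Theorem \ref{key} and the measurability of $X$, while commutativity is forced purely algebraically via the identity $u^\alpha=a^{-1}u^{-1}a$.
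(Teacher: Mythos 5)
Your proof is correct and follows essentially the same route as the paper: Theorem \ref{key} with $k=3$ yields the four-fold intersection $X\cap u_1X\cap u_2X\cap(u_1u_2)X$, a common element forces $u_1u_2=u_2u_1$ exactly as in the paper's computation, the core of $\langle V\rangle$ gives the open normal abelian subgroup, and the profinite refinement via the subgroup $Y=t^{-1}X\cap N$ of positive measure is also the paper's argument. The one point where you genuinely diverge (and improve) is the final openness step: the paper asserts that $X$ is closed --- which is not justified, since $\alpha$ is not assumed continuous --- and deduces that $A$ is a closed subgroup of positive measure, whereas your Steinhaus-type argument $YY^{-1}=Y$ uses only measurability and positive measure, so it sidesteps that gap.
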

\begin{proof}
It follows from Theorem \ref{key}, there exists an open subset $U$ of $G$ containing $1$ such that $X \cap u_1 X \cap u_2X \cap u_3 X$ is of positive Haar measure for all 
$u_1,u_2,u_3\in U$. Since $U$ is open and $1\in U$, it follows from \cite[Theorem 4.5]{hr} that there exists an open subset $V\subseteq U$ such that $1\in V$, $V=V^{-1}=\{v^{-1} \;|\; v\in V\}$ and $V^2:=\{v_1v_2 \;|\; v_1,v_2\in V\}\subseteq U$ (see Thoerem 4.5 of \cite{hr}). Now suppose that $a,b$ are arbitrary elements of $V$. Thus  
\begin{equation*}
X \cap b^{-1}X \cap a^{-1}X \cap b^{-1}a^{-1}X
\end{equation*}
has positive Haar measure and in particular, it is non-empty. It follows that there exists $x\in X$ such that 
$bx,ax,abx$ are all in $X$. Therefore 
$x^\alpha=x^{-1}$ and so  $b^\alpha=b^x=b^{-1}$, $a^{\alpha}=a^x=a^{-1}$ and $(ab)^\alpha=(ab)^x=(ab)^{-1}$.
It follows that $[a,b]=1$ and so the subgroup $H$ generated by $V$ is abelian. 
Since $H=\bigcup_{i=1}^n V^n$, $H$ is open. Now take the core $K$ of $H$ in $G$, which is open normal and abelian. This completes the proof of the first part. 

For the second part, since $|G:K|$ is finite, $tK \cap X$ has positive Haar measure for some $t\in X$. Thus $A:=\{a \in K \;|\; ta \in X \}$ has positive Haar measure and since $K$ is abelian, $A$ is a subgroup of $G$. Since $X$ is closed, $A$ is closed and it is open since $A$ has positive Haar measure. This completes the proof.  
\end{proof}

\section{Compact groups with many elements of order $3$}

In this section we study Conjecture \ref{con} for $n=3$. 

The following lemma is used to prove for a relatively $8$-large set with respect to an open subgroup $U$ containing $1$, $U$ is $2$-Engel. 

\begin{lem}\label{2-Engel}
Let $a$, $b$ and $x$ be elements of a group such that 
\begin{equation*}
x^3=(bx)^3=(ax)^3=(a^{-1}x)^3=(ab^{-1}x)^3=(ba^{-1}x)^3=(abx)^3=(b^{-1}a^{-1}x)^3=1. 
\end{equation*}
Then $[a,b,b]=1$.
\end{lem}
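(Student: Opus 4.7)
Since $x^3=1$, a direct expansion gives $(yx)^3=y\cdot\sigma(y)\cdot\sigma^2(y)\cdot x^3$, where $\sigma(y):=xyx^{-1}$ is conjugation by $x^{-1}$ (so $\sigma^3=\mathrm{id}$). Hence each cube hypothesis $(yx)^3=1$ is equivalent to the splitting identity $y\sigma(y)\sigma^2(y)=1$, and the seven listed conditions say exactly that the norm $N(y):=y\sigma(y)\sigma^2(y)$ vanishes on $S:=\{a,a^{-1},b,ab,b^{-1}a^{-1},ab^{-1},ba^{-1}\}$.

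Whenever both $y$ and $y^{-1}$ belong to $S$, combining $N(y^{-1})=1$ with the $\sigma$-cyclic shifts of $N(y)=1$ forces $[\sigma(y),\sigma^2(y)]=1$, so that $\{y,\sigma(y),\sigma^2(y)\}$ is pairwise commuting. This applies to $y\in\{a,ab,ab^{-1}\}$ (the only element of $S$ whose inverse is missing is $b$). Writing $\alpha_i:=\sigma^i(a)$ and $\beta_i:=\sigma^i(b)$ for $i=0,1,2$, the data extracted are: the $\alpha_i$ pairwise commute and $\alpha_0\alpha_1\alpha_2=1$; $\beta_0\beta_1\beta_2=1$; the $\alpha_i\beta_i$ pairwise commute with $(\alpha_0\beta_0)(\alpha_1\beta_1)(\alpha_2\beta_2)=1$; the $\alpha_i\beta_i^{-1}$ pairwise commute with $(\alpha_0\beta_0^{-1})(\alpha_1\beta_1^{-1})(\alpha_2\beta_2^{-1})=1$. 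Expanding $[\alpha_i\beta_i,\alpha_j\beta_j]=1$ via $[\alpha_i,\alpha_j]=1$ yields, for $i\neq j$, the relation $\beta_i^{\alpha_j}\beta_j=\beta_j^{\alpha_i}\beta_i$; the parallel manipulation of $[\alpha_i\beta_i^{-1},\alpha_j\beta_j^{-1}]=1$ yields $\beta_j\beta_i^{\alpha_j}=\beta_i\beta_j^{\alpha_i}$.

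With these tools in hand, the plan is to use $(\alpha_0\beta_0)(\alpha_1\beta_1)(\alpha_2\beta_2)=1$ together with $\alpha_0\alpha_1\alpha_2=1$ to collect the $\alpha$-factors on one side and thereby obtain an explicit formula for $aba^{-1}$ as a word in $\beta_0,\beta_1,\beta_2$ and the $\alpha_i$. Applying the parallel procedure to the $\beta^{-1}$-splitting identity produces a companion formula for $ab^{-1}a^{-1}$. The conclusion $[a,b,b]=1$ is equivalent to $[a,b]$ centralizing $b$, and by a short commutator manipulation this is equivalent to $[a,b^{-1}]=[a,b]^{-1}$, i.e.\ to a specific relation between $aba^{-1}$ and $ab^{-1}a^{-1}$. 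Feeding the two derived formulas into that relation, using the twin commutation identities to pull $\beta$-conjugates past one another and $\beta_0\beta_1\beta_2=1$ to cancel at the end, is the last step.

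The principal obstacle is purely bookkeeping: no direct commutation between $\beta_i$ and $\beta_j$, or between $\beta_i$ and $\alpha_k$, is available, so every rearrangement of such factors has to be justified by one of the two twin identities above. Because the splitting identities for $ab$ and $ab^{-1}$ are mirror images under $b\leftrightarrow b^{-1}$, the derived expressions for $aba^{-1}$ and $ab^{-1}a^{-1}$ are likewise mirror images, and it is exactly this $b\leftrightarrow b^{-1}$ symmetry that should make the final verification of $[a,b^{-1}]=[a,b]^{-1}$ tractable rather than hopeless.
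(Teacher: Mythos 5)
Your reformulation is sound: with $\sigma(y)=xyx^{-1}$ one indeed has $(yx)^3=y\,\sigma(y)\,\sigma^2(y)\,x^3$, so the hypotheses become the vanishing of the norm $N(y)=y\sigma(y)\sigma^2(y)$ on $S=\{a,a^{-1},b,ab,(ab)^{-1},ab^{-1},ba^{-1}\}$; the observation that $N(y)=N(y^{-1})=1$ forces $\{y,\sigma(y),\sigma^2(y)\}$ to commute pairwise is correct, as is the inventory of which $y\in S$ this applies to, and the two ``twin'' identities $\beta_i^{\alpha_j}\beta_j=\beta_j^{\alpha_i}\beta_i$ and $\beta_j\beta_i^{\alpha_j}=\beta_i\beta_j^{\alpha_i}$ follow exactly as you say. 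This is precisely the splitting-automorphism framework of Makarenko--Khukhro, which is the right setting. (For comparison: the paper gives no proof of its own here; it simply cites Proposition 4.3 of Jaber--Wagner and Proposition 1 of Makarenko--Khukhro.)

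The problem is that everything after that point is not a proof but an announcement of one. The derivation of $[a,b,b]=1$ from the extracted identities is the entire mathematical content of the lemma, and your text handles it only prospectively: ``the plan is to use\dots'', ``is the last step'', ``should make the final verification\dots tractable''. No formula for $aba^{-1}$ or $ab^{-1}a^{-1}$ is actually written down, the relation $[a,b^{-1}]=[a,b]^{-1}$ is never verified, and you yourself flag the remaining bookkeeping as the principal obstacle. That bookkeeping is not routine: the $\beta_i$ do not commute with each other (only $b$ among the generators lacks its inverse in $S$, so no pairwise commutation of the $\beta_i$ is available), and every interchange must be licensed by one of the twin identities or their $\sigma$-shifts; whether your particular route closes up is exactly what has to be checked, and it is exactly the content of the cited propositions. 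To complete the argument you must carry the computation to the end --- for instance, reduce the goal to $[\beta_0^{\alpha_0^{-1}},\beta_0]=1$, substitute $\beta_0^{\alpha_0^{-1}}=\beta_2^{-1}(\beta_1^{\alpha_2})^{-1}$ and $\beta_0=\beta_2^{-1}\beta_1^{-1}$ obtained from the product relations, and then verify the resulting commutation using the $\sigma$-shifted consequences of your identities --- or else explicitly invoke the cited results. As it stands there is a genuine gap.
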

\begin{proof}
See the proof of Proposition 4.3 of \cite{JW2} and proof of Proposition 1 of \cite{MK}. 
\end{proof}
The following two next lemmas give a sufficient condition on a symmetric subset of a group to generate a $2$-Engel subgroup.
\begin{lem}\label{nil}
There exists a positive integer $k$ such that every group  generated by a symmetric subset $X=X^{-1}$ containing $1$ satisfying the property  $[x,y,y]=1$ for all $x,y\in X^k:=\{x_1\cdots x_k \;|\; x_1,\dots, x_k \in X\}$ is nilpotent of class at most $3$.
\end{lem}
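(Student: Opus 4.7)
The plan is to first promote the local $2$-Engel condition from $X^k$ to the whole group $G:=\langle X\rangle$, and then invoke Levi's classical theorem that a $2$-Engel group is nilpotent of class at most~$3$.

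As a first step, since $1\in X$ we have the ascending chain $X\subseteq X^2\subseteq\cdots\subseteq X^k$, so the hypothesis gives $[u,v,v]=1$ for all $u,v$ lying in any $X^m$ with $m\le k$. Expanding the relation $[u,vw,vw]=1$ by standard commutator calculus and using the basic $2$-Engel identities $[u,v,v]=[u,w,w]=[u,vw,vw]=1$ for triples of elements of sufficiently small word-length in $X$, I would derive the linearised identity $[u,v,w][u,w,v]=1$ together with its standard consequences (triple commutators are antisymmetric in their last two arguments and have exponent dividing~$3$). These identities would then be valid for all triples of elements of some $X^{k'}$ with $k'$ a fixed fraction of $k$.

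Next I would propagate the $2$-Engel identity to all of $G$ by a double induction on the $X$-word-lengths of the two arguments, using expansions of the shape
\[
[ab,h,h]=[a,h,h]^{b}\,[b,h,h]\cdot(\text{corrections involving triple commutators}),
\]
and analogously for the second slot. Modulo the induction hypothesis, each inductive step reduces to checking that the correction triple commutators vanish, which follows from the linearised identities established in the previous paragraph, provided $k$ has been chosen generously enough that every entry appearing in these corrections still lies inside $X^k$. Once $[g,h,h]=1$ has been verified throughout $G$, Levi's theorem finishes the argument.

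The principal obstacle is the combinatorial book-keeping: each commutator expansion lengthens the relevant words, and $k$ must be fixed \emph{in advance} so that all auxiliary commutators arising in the inductive steps only involve elements on which the hypothesis is available. Because the lemma asserts only the existence of such a $k$, no sharp value is needed, and arguments analogous to those in the proof of \cite[Proposition~4.3]{JW2} and \cite[Proposition~1]{MK} should supply a universal $k$ that suffices.
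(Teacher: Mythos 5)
Your strategy --- linearise the Engel condition on short words and then propagate it to all of $G$ by a double induction on word length --- has a genuine gap at the propagation step, which is exactly the step you defer to ``combinatorial book-keeping.'' When you expand $[ab,h,h]$ via $[ab,h]=[a,h]^b[b,h]$, the correction terms are not merely triple commutators in short words: they involve conjugates and nested commutators such as $[a,h,b]$ and $[[a,h],[b,h]]$, whose entries include the second argument $h$, which in your induction is an arbitrarily long word in $X$. To kill these corrections you would need the linearised identities to hold for entries of unbounded length --- precisely what you are trying to prove --- or an a priori bound on the nilpotency class so that the collection process terminates. Since $k$ must be fixed in advance while the word lengths of $g$ and $h$ grow without bound, no choice of $k$ ``generous enough'' can make every entry of every correction land back in $X^k$; the induction does not close. (Also, the results you invoke at the end, \cite[Proposition 4.3]{JW2} and \cite[Proposition 1]{MK}, are used in this paper for a different purpose, namely to derive $[a,b,b]=1$ from eight cube conditions in Lemma \ref{2-Engel}; they do not supply the constant $k$ of the present lemma.)

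The paper's own proof avoids all commutator calculus via a compactness argument, and this is the idea missing from your proposal. Nilpotency of class at most $3$ is a condition on $4$-generator subgroups, so it suffices to treat $4$-element subsets of $X$. In the free group $F_4$ of rank $4$, the quotient by the verbal subgroup $N=\langle [a,b,b] \;|\; a,b\in F_4\rangle$ is a $2$-Engel group, hence nilpotent of class at most $3$ by Levi's theorem, hence finitely presented; consequently $N$ is the normal closure of finitely many commutators $[a_1,b_1,b_1],\dots,[a_s,b_s,b_s]$. Choosing $k$ so that all the words $a_i,b_i$ lie in $\{1,x_1^{\pm1},\dots,x_4^{\pm1}\}^k$ yields the (non-constructive) constant of the lemma: any group generated by four elements of $X$ and satisfying $[x,y,y]=1$ for all $x,y\in X^k$ satisfies those finitely many relations and is therefore nilpotent of class at most $3$. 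This converts the implication ``$2$-Engel $\Rightarrow$ class $\le 3$'' into finitely many relations of bounded length, with no need to propagate identities by induction.
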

\begin{proof}
It is enough to show that every $4$-element subset of $X$ generates a nilpotent group of class at most $3$. Since $2$-Engel groups are nilpotent of class at most $3$ \cite[Corollary 3, page 45]{R2}, $\frac{F_4}{\langle [a,b,b] \;|\; a,b\in F_4 \rangle}$ is nilpotent of class at most $3$, where $F_4$ is the free group of rank $4$ on the free generators $x_1,x_2,x_3,x_4$. Now it follows from \cite[Lemma 1.43, page 32 and its Corollary, page 33]{R1} that  $\langle [a,b,b] \;|\; a,b\in F_4 \rangle$ is the normal closure of a finite number of its elements. Then, there exist elements $a_1,\dots, a_s, b_1,\dots,b_s \in F_4$ such that 
\begin{equation*} \langle [a,b,b] \;|\; a,b\in F_4 \rangle= \langle  [a_1,b_1,b_1],\dots, [a_s,b_s,b_s]\rangle^{F_4}.
\end{equation*}
Let $k$ be a positive integer such that  
$$\{a_1,\dots,a_s,b_1,\dots,b_s\}\subseteq \{1, x_1,x_2,x_3,x_4,x_1^{-1},x_2^{-1},x_3^{-1},x_4^{-1}\}^k.$$ 
Therefore, any group  satisfying the laws $[a_i,b_i,b_i]=1$ for all $i=1,\dots,s$ is nilpotent of class at most $3$ and since $a_i$s and $b_i$s are the product 
of at most $k$ free generators $x_j$s and their inverses, the proof is now complete. 
\end{proof}
\begin{lem}\label{eng}
There exists a positive integer $\ell$ such that every group  generated by a symmetric subset $X=X^{-1}$ containing $1$ satisfying the property  $[x,y,y]=1$ for all $x,y\in X^\ell$ is a $2$-Engel group.
\end{lem}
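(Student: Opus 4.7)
The plan is to reduce to nilpotency class $3$ via Lemma \ref{nil} and then exploit multilinearity of the triple commutator in such groups to deduce the $2$-Engel law from its validity on products of only two generators.

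Concretely, I would take $\ell := \max\{k, 2\}$, where $k$ is the constant from Lemma \ref{nil}. Setting $G := \langle X \rangle$, the hypothesis together with Lemma \ref{nil} (applied via $X^k \subseteq X^\ell$) yields that $G$ is nilpotent of class at most $3$, so $\gamma_3(G)$ is central and abelian. A standard commutator calculation, using $[uv,w] \equiv [u,w][v,w] \pmod{\gamma_4(G)}$ and its analogues in the other slots, shows that the triple commutator $(a,b,c) \mapsto [a,b,c]$ is $\mathbb{Z}$-multilinear as a map $G \times G \times G \to \gamma_3(G)$.

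Since $X = X^{-1}$ and $1 \in X$, any $g, h \in G$ can be written as $g = x_1 \cdots x_m$ and $h = y_1 \cdots y_n$ with $x_i, y_j \in X$. Writing $\gamma_3(G)$ additively, multilinearity yields
\[ [g, h, h] = \sum_{i=1}^{m} \sum_{j, k = 1}^{n} [x_i, y_j, y_k]. \]
The diagonal terms $[x_i, y_j, y_j]$ vanish by hypothesis, since $x_i, y_j \in X \subseteq X^\ell$. For each off-diagonal pair $\{j, k\}$, expanding $[x_i, y_j y_k, y_j y_k]$ bilinearly in the second and third slots gives
\[ [x_i, y_j y_k, y_j y_k] = [x_i, y_j, y_j] + [x_i, y_j, y_k] + [x_i, y_k, y_j] + [x_i, y_k, y_k], \]
so $[x_i, y_j, y_k] + [x_i, y_k, y_j]$ equals $[x_i, y_j y_k, y_j y_k]$ up to the diagonal corrections. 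All three of these quantities vanish by hypothesis, since $x_i \in X \subseteq X^\ell$, $y_j, y_k \in X \subseteq X^\ell$ and $y_j y_k \in X^2 \subseteq X^\ell$. Therefore $[g, h, h] = 1$ for all $g, h \in G$, i.e., $G$ is $2$-Engel.

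The main technical point is the multilinearity of the triple commutator in a class-$3$ nilpotent group; this rests on the identity $[uv, w] \equiv [u, w][v, w] \pmod{\gamma_4}$ together with the observation that conjugation of a weight-$3$ commutator by any element of positive weight produces something of weight $\geq 4$ and hence is trivial under our hypothesis. These are elementary but must be applied consistently in all three arguments. Once this multilinearity is set up, the cancellation above is automatic and the proof yields the explicit value $\ell = \max\{k, 2\}$.
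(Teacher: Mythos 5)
Your proposal is correct and follows essentially the same route as the paper: choose $\ell=\max\{k,2\}$, invoke Lemma \ref{nil} to get nilpotency class at most $3$, expand $[g,h,h]$ by multilinearity of the triple commutator, kill the diagonal terms directly, and kill the symmetrized off-diagonal terms $[x_i,y_j,y_k][x_i,y_k,y_j]$ by expanding $[x_i,y_jy_k,y_jy_k]=1$, which is available because $y_jy_k\in X^2\subseteq X^\ell$. Your write-up is in fact somewhat more explicit than the paper's about where the multilinearity comes from.
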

\begin{proof}
Let $\ell:=\max\{k,2\}$, where $k$ is a positive integer mentioned in the statement of Lemma \ref{nil}. By Lemma \ref{nil}, $G:=\langle X \rangle$ is nilpotent of class at most $3$. Suppose that $g$ and $h$ are arbitrary elements of  $G$. Then $g=x_1\cdots x_t$ and $h=y_1\cdots y_t$ for some $x_i,y_j\in X$. Now we may write
$[g,h,h]$ as follows 
$$\prod_{i=1}^t \big(\prod_{j<k,i,j=1}^t ([x_i,y_j,y_k][x_i,y_k,y_j]) \big).$$ 
The latter follows from the facts that $G$ is nilpotent of class at most $3$ and $[x,y,y]=1$ for all $x,y \in X$. Now since $\ell\geq 2$, $[x,yz,yz]=1$ for all $x,y,z \in X$. It follows that $[x,y,z][x,z,y]=1$. Therefore $[g,h,h]=1$ and so $G$ is $2$-Engel.
\end{proof}
\begin{thm}\label{spilitEngel}
Let $G$ be a compact group and $\alpha$ be a  not necessarily continuous automorphism of $G$  such that $\alpha^3=1$ and the set $X=\{x\in G \;|\; x^{\alpha^2} x^{\alpha} x=1\}$ is measurable. If $X$ has positive Haar measure, then $G$ contains an open normal  $2$-Engel subgroup. 
\end{thm}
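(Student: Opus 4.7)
The plan is to embed $G$ into the abstract (not necessarily topological) semidirect product $\tilde G:=G\rtimes\langle\alpha\rangle$, translate the splitting-automorphism condition into an order-$3$ condition for certain elements of the coset $G\cdot\alpha\subseteq\tilde G$, and then apply Lemma \ref{2-Engel} inside $\tilde G$ to force $[a,b,b]=1$ for $a,b$ in a small open neighborhood of $1$. The 2-Engel lemmas (Lemma \ref{nil} and Lemma \ref{eng}) together with Theorem \ref{key} then upgrade this local identity to an open normal $2$-Engel subgroup.

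First I would do the semidirect-product bookkeeping. Using the relation $\alpha g=g^\alpha\alpha$, a direct computation yields $(c\alpha)^3=c\,c^\alpha\,c^{\alpha^2}$ for every $c\in G$; inverting this product shows that $(c\alpha)^3=1$ iff $c^{-1}\in X$. Now fix $a,b\in G$ and set $x:=y^{-1}\alpha$ for a $y\in G$ to be chosen. Each of the eight elements $wx=(wy^{-1})\alpha$ with $w\in\{1,b,a,a^{-1},ab^{-1},ba^{-1},ab,b^{-1}a^{-1}\}$ has order dividing $3$ in $\tilde G$ exactly when $yw^{-1}\in X$. Thus, if $y$ can be arranged so that
\[
y,\ yb^{-1},\ ya^{-1},\ ya,\ yba^{-1},\ yab^{-1},\ yb^{-1}a^{-1},\ yab\ \in\ X,
\]
then Lemma \ref{2-Engel}, applied in $\tilde G$ with these eight witnesses, delivers $[a,b,b]=1$ in $\tilde G$, and hence in $G$ since $a,b\in G$.

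To produce such a $y$ for $a,b$ close to $1$, I would apply Theorem \ref{key} to $X^{-1}$. Since $G$ is compact (hence unimodular) and inversion is a measure-preserving homeomorphism, $X^{-1}$ is measurable of positive Haar measure, so Theorem \ref{key} gives an open neighborhood $U$ of $1$ with $X^{-1}\cap v_1X^{-1}\cap\cdots\cap v_7X^{-1}\neq\emptyset$ for all $v_i\in U$. By \cite[Theorem 4.5]{hr}, pick a symmetric open neighborhood $V=V^{-1}$ of $1$ with $V\cup V^2\subseteq U$. For any $a,b\in V$ the seven multipliers $b^{-1},a^{-1},a,ba^{-1},ab^{-1},b^{-1}a^{-1},ab$ lie in $U$, so the intersection of the corresponding eight left translates of $X^{-1}$ contains some $z$; then $y:=z^{-1}$ satisfies the eight required memberships of $X$ (via the equivalence $yw\in X\iff y^{-1}\in wX^{-1}$). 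Combined with the previous step, this gives $[a,b,b]=1$ for all $a,b\in V$.

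To finish, I would choose a symmetric open neighborhood $W$ of $1$ with $W^\ell\subseteq V$, where $\ell$ is the constant of Lemma \ref{eng}. Then $[x,y,y]=1$ for all $x,y\in W^\ell$, so Lemma \ref{eng} implies that the open subgroup $H:=\langle W\rangle$ of $G$ is $2$-Engel. Compactness of $G$ forces $[G:H]<\infty$, and hence the normal core $K:=\bigcap_{g\in G}gHg^{-1}$ is a finite intersection of open conjugates of $H$, so it is open and normal in $G$; being a subgroup of the $2$-Engel group $H$, it is itself $2$-Engel. The main obstacle is the translation in the second paragraph: matching the eight order-$3$ conditions of Lemma \ref{2-Engel} inside $\tilde G$ with the eight membership conditions on $y$ in $G$ via the identity $(c\alpha)^3=1\iff c^{-1}\in X$, while keeping in mind that $\tilde G$ is only an abstract group. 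Once this translation is in place, the rest is the now-standard combination of Theorem \ref{key} with the two $2$-Engel lemmas.
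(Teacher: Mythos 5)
Your proposal is correct and follows essentially the same route as the paper: Theorem \ref{key} gives relative $8$-largeness (you apply it to $X^{-1}$ because your conjugation convention in $G\rtimes\langle\alpha\rangle$ makes $(c\alpha)^3=1$ equivalent to $c^{-1}\in X$, a harmless variant of the paper's use of $X$ itself), Lemma \ref{2-Engel} in the semidirect product yields $[a,b,b]=1$ for $a,b$ near the identity, and Lemma \ref{eng} together with passing to the core of the resulting open subgroup finishes the argument. The only cosmetic difference is that the paper arranges $V^{2\ell}\subseteq U$ once at the outset and takes $a,b\in V^{\ell}$, whereas you shrink the neighborhood twice ($V^2\subseteq U$, then $W^{\ell}\subseteq V$).
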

\begin{proof}
By Lemma \ref{key} there exists an open subset $U$ of $G$ such that $X \cap u_1 X \cap \cdots \cap u_7 X$ is of positive Haar measure for all $u_1,\dots,u_7\in U$. Since $U$ is open and $1\in U$, it follows from \cite[Theorem 4.5]{hr} there exists an open subset $V\subseteq U$ such that $1\in V$, $V=V^{-1}$ and $V^{2\ell}\subseteq U$ . Now suppose that $a_1,\dots, a_\ell$ and $b_1,\dots b_{\ell}$ are arbitrary elements of $V$. Let $a=a_1\cdots a_\ell$ and $b=b_1\cdots b_\ell$. Thus  
\begin{equation*}
X \cap b^{-1}X \cap aX \cap a^{-1}X \cap ab^{-1}X \cap ba^{-1}X \cap abX \cap b^{-1}a^{-1}X
\end{equation*}
has positive Haar measure and in particular, it is non-empty. It follows that there exists $x\in X$ such that 
$bx,ax,a^{-1}x,ab^{-1}x,ba^{-1}x,abx,b^{-1}a^{-1}x$ are all in $X$. 
Working in the semidirect product $G \rtimes \langle \alpha \rangle$, since $\alpha^3=1$, it follows that  $a\in X$ if and only if $(a\alpha)^3=1$. 
Therefore $(x\alpha)^3=(bx\alpha)^3=1$ and $$(ax\alpha)^3=(a^{-1}x\alpha)^3=(ab^{-1}x\alpha)^3=(ba^{-1}x\alpha)^3=(abx\alpha)^3=(b^{-1}a^{-1}x\alpha)^3=1.$$
 Now Lemma \ref{2-Engel} implies that $[a,b,b]=1$ and  it follows from Lemma \ref{eng} that $H:=\langle V \rangle$ is $2$-Engel. Since $V$ is open, $H$ is a open. Now take the core of $H$ in $G$, which is normal open and $2$-Engel. This completes the proof.
\end{proof}
\begin{cor}\label{Engel}
Let $G$ be a compact group such that the set $X=\{x\in G \;|\; x^3=1\}$ has positive Haar measure. Then $G$ contains an open normal $2$-Engel subgroup. 
\end{cor}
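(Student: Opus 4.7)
The plan is to obtain Corollary~\ref{Engel} as an immediate specialisation of Theorem~\ref{spilitEngel}. The key observation is that the identity map $\alpha = \mathrm{id}_G$ is an automorphism of $G$ satisfying $\alpha^3 = 1$, and that with this choice the set appearing in Theorem~\ref{spilitEngel} becomes
\[ \{x \in G : x^{\alpha^2} x^{\alpha} x = 1\} = \{x \in G : x\cdot x\cdot x = 1\} = \{x \in G : x^3 = 1\}, \]
which is exactly the set $X$ whose positive Haar measure has been hypothesised.

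First I would verify the measurability hypothesis required by Theorem~\ref{spilitEngel}: the set $X = \{x \in G : x^3 = 1\}$ is the preimage of $\{1\}$ under the continuous map $g \mapsto g^3$ on the compact Hausdorff group $G$, so it is closed and in particular Borel measurable. Combined with the standing assumption $\mathfrak{m}_G(X)>0$, both hypotheses of Theorem~\ref{spilitEngel} are met for $\alpha = \mathrm{id}_G$.

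Applying Theorem~\ref{spilitEngel} with this choice of $\alpha$ then produces an open normal $2$-Engel subgroup of $G$, which is exactly the conclusion sought. There is no real obstacle in this reduction; all the combinatorial and measure-theoretic content, namely the extraction of an open neighbourhood of $1$ on which $X$ is relatively $7$-large (via Theorem~\ref{key}), the identity in Lemma~\ref{2-Engel} coming from a relation in the semidirect product with $\langle\alpha\rangle$, and the passage from this local Engel condition to a $2$-Engel subgroup (via Lemmas~\ref{nil} and~\ref{eng}), has already been absorbed into the proof of Theorem~\ref{spilitEngel}.
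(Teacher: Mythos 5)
Your proposal is correct and coincides with the paper's own argument, which simply takes $\alpha$ to be the identity automorphism in Theorem~\ref{spilitEngel}. Your extra remark that $X$ is closed (hence measurable) as the preimage of $\{1\}$ under the continuous cubing map is a small, welcome verification that the paper leaves implicit.
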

\begin{proof}
Take $\alpha$ as the identity automorphism in Theorem \ref{spilitEngel}.
\end{proof}

\section{Acknowledgements}

The first author is grateful to National Elite Foundation of Iran for financial support. The second author  was supported in part
by a grant from School of Mathematics, Institute for Research in Fundamental Sciences (IPM). 


\begin{thebibliography}{99}
\bibitem{folland} G. B. Folland, A Course in Abstract Harmonic Analysis,  Studies in Advanced Mathematics. Taylor \& Francis, 1994.
\bibitem{JW} K. Jaber and F. O. Wagner, Largeur et nilpotence, Comm. Algebra, 28, No. 6, (2000) 2869-2885.
\bibitem{JW2} K. Jaber and F. O. Wagner, Largeness and equational probability in groups, \href{https://arxiv.org/abs/1909.01588}{https://arxiv.org/abs/1909.01588}.
\bibitem{hr} E. Hewitt and K.A. Ross, Abstract Harmonic Analysis: Structure and Analysis for Compact   Groups Analysis on Locally Compact Abelian Groups,  Grundlehren der mathematischen Wissenschaften, Springer, Berlin,  Heidelberg, 2013.

\bibitem{LP} L. L\'evai  and L. Pyber, Profinite groups with many commuting pairs or involutions, Arch. Math. (Basel) 75 (2000) 1-7.
\bibitem{MK} N. Yu. Makarenko and E. I. Khukhro, Groups with largely splitting automorphisms of orders three and four, Algebra and Logic,  42, No. 3, (2003) 165-176.
\bibitem{MOF} \href{www.mathoverflow.net}{https://mathoverflow.net/questions/346071/continuous-function-defined-by-measurable-sets}
\bibitem{R1} D. J. S. Robinson, Finiteness conditions and generalized souble groups, Part 1, Ergebnisse der Mathematik und  ihrer Grenzgebiete, Band 62, Springer, Berlin, 1972.
\bibitem{R2} D. J. S. Robinson, Finiteness conditions and generalized souble groups, Part 2, Ergebnisse der Mathematik und  ihrer Grenzgebiete, Band 63, Springer, Berlin, 1972.
\bibitem{Ko}   Unsolved problems in group theory, The Kourovka Notebook, No. 19,
Edited by V. D. Mazurov and E. I. Khukhro, Russian Academy of Sciences, Siberian Division, Institute of Mathematics, Novosibirisk, 2019.
\end{thebibliography}
\end{document}